\newtheorem{proposition}{Proposition}[section]
\newtheorem{lemma}[proposition]{Lemma}
\newtheorem{theorem}[proposition]{Theorem}
\def\la{\lambda}
\def\ep{\varepsilon}
\def\l{{\langle}}
\def\r{\rangle}
\newcommand{\wt}{\widetilde}
\def\R{{\mathbb R}}
\def\E{{\mathbb E}}
\def\P{{\mathbb P}}
\makeatletter \@addtoreset{equation}{section} \makeatother
\newcommand {\qed}%
{%
    {}\hfill
    {}\hfill
    {$\square $}%
    \vspace {0.3cm}%
    \pagebreak [2]%
    \par
}%
\newenvironment{proof}[1]{%
    \vspace{0.3cm}%
    \pagebreak [2]%
    \par%
    \noindent {\bf  Proof~#1\ }}{\qed}%
\newenvironment{remark}{%
    \vspace{0.3cm} \pagebreak [2]%
    \par%
    \refstepcounter{proposition}
    \noindent%
    {\bf Remark~\theproposition\  }}{}%
\begin{document}

\title {On the Explicit Height Distribution and Expected Number of Local Maxima of Isotropic Gaussian Random Fields \thanks{Research partially
supported by NIH grant R01-CA157528.}}
\author{Dan Cheng\\ North Carolina State University
 \and Armin Schwartzman \\ North Carolina State University }

\maketitle

\begin{abstract}
The explicit formulae for the height distribution and expected number of local maxima have been obtained for isotropic Gaussian random fields on certain low-dimensional Euclidean space or low-dimensional spheres.
\end{abstract}

{\bf Keywords:} Height distribution; local maxima; Gaussian orthogonal ensemble; isotropic; Gaussian random field; sphere.

\section{Introduction}
It has been known that, by using the Kac-Rice formula, one can obtain implicit formulae for the height distribution and expected number of local maxima of smooth Gaussian random fields parameterized over an $N$-dimensional space. See, for example, Adler and Taylor (2007) and Cheng and Schwartzman (2015). In particular, if the Gaussian field is isotropic, then the application of certain Gaussian random matrices involving Gaussian orthogonal ensemble (GOE) makes the above implicit formulae computable [cf. Cheng and Schwartzman (2015) and Fyodorov(2004)]. However, for an arbitrary dimension $N$, the general explicit formulae are still hard to formulate due to the difficulty of evaluating certain GOE computation, see Theorems \ref{Thm:Palm distr iso Euclidean} and \ref{Thm:Palm distr iso sphere} below. Motivated by statistical applications of detection of peaks in Cheng and Schwartzman (2014), we investigate here the explicit formulae for the height distribution and expected number of local maxima of isotropic Gaussian random fields on $N$-dimensional Euclidean space or $N$-dimensional spheres, where $N=1, 2, 3$.

\section{Preliminary GOE Computation}
Recall that an $N\times N$ random matrix $M_N$ is said to have the Gaussian Orthogonal Ensemble (GOE) distribution if it is symmetric, with centered Gaussian entries $M_{ij}$ satisfying ${\rm Var}(M_{ii})=1$, ${\rm Var}(M_{ij})=1/2$ if $i<j$ and the random variables $\{M_{ij}, 1\leq i\leq j\leq N\}$ are independent. Moreover, the explicit formula for the distribution $Q_N$ of the eigenvalues $\la_i$ of $M_N$ is given by
\begin{equation}\label{Eq:GOE density}
Q_N(d\la)=\frac{1}{c_N} \prod_{i=1}^N e^{-\frac{1}{2}\la_i^2}d\la_i \prod_{1\leq i<j\leq N}|\la_i-\la_j|\mathbbm{1}_{\{\la_1\leq\ldots\leq\la_N\}},
\end{equation}
where the normalization constant $c_N$ can be computed from Selberg's integral
\begin{equation}\label{Eq:normalization constant}
c_N=\frac{1}{N!}(2\sqrt{2})^N \prod_{i=1}^N\Gamma\Big(1+\frac{i}{2}\Big).
\end{equation}
Denote by $\mathcal{N}(\mu,\sigma^2)$ a Gaussian distribution with mean $\mu$ and variance $\sigma^2$. Let $\Phi(x)=\P(X\le x)$ and $\Phi_\Sigma(x_1, x_2)=\P(X_1\le x_1, X_2\le x_2)$, where $X\sim \mathcal{N}(0,1)$ and $(X_1, X_2)$ is a centered bivariate Gaussian vector with covariance $\Sigma$.  For $\gamma>0$, let
\begin{equation}\label{Eq:G}
G_\gamma(x)=\int_{-\infty}^x e^{-\gamma t^2} dt.
\end{equation}
It can be seen that $G_\gamma(x) = \sqrt{\pi/\gamma}\Phi(\sqrt{2\gamma}x)$.

\begin{lemma}\label{Lemma:G}
For constants $\gamma>0$, $\sigma>0$, $\alpha>0$ and $\beta \in \R$,
\begin{equation*}
\begin{split}
\int_{-\infty}^\infty e^{-\alpha(x-\beta)^2} G_\gamma(x) dx  &= \frac{\pi}{\sqrt{\alpha\gamma}}\Phi\left(\frac{\beta\sqrt{2\alpha\gamma}}{\sqrt{\alpha+\gamma}} \right),\\
\int_{-\infty}^\infty e^{-\alpha(y-\beta)^2} \int_{-\infty}^y e^{-\sigma x^2} G_\gamma(x)dxdy &= \frac{\pi^{3/2}}{\sqrt{\alpha\sigma\gamma}} \Phi_\Sigma(0,\beta),
\end{split}
\end{equation*}
where
\begin{equation}\label{Eq:Sigma}
\begin{split}
\Sigma = \left(
\begin{array}{cc}
\frac{\sigma+\gamma}{2\sigma\gamma} & -\frac{1}{2\sigma} \\
-\frac{1}{2\sigma} &  \frac{\sigma+\alpha}{2\sigma\alpha} \\
\end{array}
\right).
\end{split}
\end{equation}
\end{lemma}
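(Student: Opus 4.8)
The plan is to turn both integrals into Gaussian expectations by exploiting two elementary facts: that $e^{-\alpha(x-\beta)^2}$ equals $\sqrt{\pi/\alpha}$ times the density of an $\mathcal{N}(\beta,1/(2\alpha))$ variable, and that the identity $G_\gamma(x)=\sqrt{\pi/\gamma}\,\Phi(\sqrt{2\gamma}x)$ recorded above lets us read $G_\gamma$ as a Gaussian tail. Concretely, if $Z\sim\mathcal{N}(0,1/(2\gamma))$ then $\Phi(\sqrt{2\gamma}x)=\P(Z\le x)$, so each occurrence of $G_\gamma$ can be removed by conditioning on such an auxiliary $Z$.

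For the first identity I would pull the constant $\sqrt{\pi/\alpha}$ out of $e^{-\alpha(x-\beta)^2}$ and the constant $\sqrt{\pi/\gamma}$ out of $G_\gamma$, reducing the left-hand side to $\frac{\pi}{\sqrt{\alpha\gamma}}\,\E[\Phi(\sqrt{2\gamma}X)]$ with $X\sim\mathcal{N}(\beta,1/(2\alpha))$. Introducing $Z\sim\mathcal{N}(0,1/(2\gamma))$ independent of $X$ gives $\E[\Phi(\sqrt{2\gamma}X)]=\P(Z-X\le 0)$, and since $Z-X\sim\mathcal{N}(-\beta,\tfrac1{2\gamma}+\tfrac1{2\alpha})$ this probability is $\Phi\big(\beta(\tfrac1{2\gamma}+\tfrac1{2\alpha})^{-1/2}\big)=\Phi\big(\beta\sqrt{2\alpha\gamma}/\sqrt{\alpha+\gamma}\big)$, which is the claim.

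For the second identity the same two reductions convert the left-hand side to $\frac{\pi^{3/2}}{\sqrt{\alpha\sigma\gamma}}\,\E\big[\mathbbm{1}_{\{X\le Y\}}\Phi(\sqrt{2\gamma}X)\big]$, where $X\sim\mathcal{N}(0,1/(2\sigma))$ and $Y\sim\mathcal{N}(\beta,1/(2\alpha))$ are independent. With $Z\sim\mathcal{N}(0,1/(2\gamma))$ independent of $(X,Y)$, this expectation is $\P(Z-X\le 0,\ X-Y\le 0)$. The pair $(Z-X,\ X-Y)$ is Gaussian with mean $(0,-\beta)$, so $(Z-X,\ X-Y+\beta)$ is centered; computing its covariance — diagonal entries ${\rm Var}(Z)+{\rm Var}(X)=\tfrac1{2\gamma}+\tfrac1{2\sigma}$ and ${\rm Var}(X)+{\rm Var}(Y)=\tfrac1{2\sigma}+\tfrac1{2\alpha}$, off-diagonal $-{\rm Var}(X)=-\tfrac1{2\sigma}$ — reproduces exactly the matrix $\Sigma$ in \eqref{Eq:Sigma}. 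Hence $\P(Z-X\le 0,\ X-Y\le 0)=\P(Z-X\le 0,\ X-Y+\beta\le\beta)=\Phi_\Sigma(0,\beta)$, finishing the proof.

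The argument is short once the representations are set up, and the only subtlety is the choice of scaling: one should represent $\Phi(\sqrt{2\gamma}x)$ through $Z\sim\mathcal{N}(0,1/(2\gamma))$ rather than through a rescaled standard normal, so that the common variable $X$ enters both coordinates of the second identity with coefficient $\pm 1$; this is precisely what makes the off-diagonal entry of the covariance equal $-1/(2\sigma)$ and match \eqref{Eq:Sigma}. Everything else is routine bookkeeping of one- and two-dimensional Gaussian probabilities and of the constants $\sqrt{\pi/\alpha}$, $\sqrt{\pi/\sigma}$, $\sqrt{\pi/\gamma}$.
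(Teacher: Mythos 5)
Your proof is correct and follows essentially the same route as the paper: the auxiliary variables $Z\sim\mathcal{N}(0,\tfrac{1}{2\gamma})$, $X\sim\mathcal{N}(0,\tfrac{1}{2\sigma})$, $Y\sim\mathcal{N}(\beta,\tfrac{1}{2\alpha})$ are exactly the paper's $Z_1,Z_2,Z_3$, and both arguments reduce the integrals to the Gaussian probabilities $\P(Z_1-Z_3<0)$ and $\P(Z_1-Z_2<0,\,Z_2-Z_3<0)$. No gaps; the covariance bookkeeping matches \eqref{Eq:Sigma}.
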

\begin{proof}\
Let $Z_1\sim \mathcal{N}(0,\frac{1}{2\gamma})$, $Z_2\sim \mathcal{N}(0,\frac{1}{2\sigma})$ and $Z_3\sim \mathcal{N}(\beta,\frac{1}{2\alpha})$ be independent. Then $(Z_1-Z_3)\sim \mathcal{N}(-\beta,\frac{\alpha+\gamma}{2\alpha\gamma})$ and hence
\begin{equation*}
\int_{-\infty}^\infty g(x) e^{-\alpha(x-\beta)^2} dx  = \frac{\pi}{\sqrt{\alpha\gamma}}\P(Z_1-Z_3 <0) = \frac{\pi}{\sqrt{\alpha\gamma}}\Phi\left(\frac{\beta\sqrt{2\alpha\gamma}}{\sqrt{\alpha+\gamma}} \right).
\end{equation*}

Let $\mu = (0, -\beta)$ and let $\Sigma$ be as defined in \eqref{Eq:Sigma}. Then $(Z_1-Z_2, Z_2-Z_3)\sim \mathcal{N}(\mu,\Sigma)$, which implies
\begin{equation*}
\begin{split}
\int_{-\infty}^\infty e^{-\alpha(y-\beta)^2} \int_{-\infty}^y e^{-x^2} g(x)dxdy &= \frac{\pi^{3/2}}{\sqrt{\alpha\sigma\gamma}} \P(Z_1-Z_2<0, Z_2-Z_3<0) \\
&= \frac{\pi^{3/2}}{\sqrt{\alpha\sigma\gamma}} \Phi_\Sigma(0,\beta).
\end{split}
\end{equation*}
\end{proof}

\begin{lemma}\label{Lemma:GOE expectation for N=1}
Let $N=1$. Then for constants $a>0$ and $b\in \R$,
\begin{equation*}
\begin{split}
\E_{GOE}^{N+1}\left\{ \exp\left[\frac{1}{2}\la_{N+1}^2 -a(\la_{N+1}-b)^2 \right] \right\} =\frac{\sqrt{4a+2}}{4a}e^{-\frac{ab^2}{2a+1}}  + \frac{b\sqrt{\pi}}{\sqrt{2a}}\Phi\left(\frac{b\sqrt{2a}}{\sqrt{2a+1}} \right).
\end{split}
\end{equation*}
\end{lemma}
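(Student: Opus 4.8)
The plan is to evaluate the expectation directly against the ordered eigenvalue density $Q_{N+1}$ of \eqref{Eq:GOE density}. With $N=1$ this is a two-variable integral over the eigenvalues $\la_1\le\la_2$ of a $2\times2$ GOE matrix, and from \eqref{Eq:normalization constant} the normalizing constant is $c_2=\frac{1}{2}(2\sqrt2)^2\Gamma(\frac32)\Gamma(2)=2\sqrt\pi$. Writing $\la:=\la_{N+1}=\la_2$ for the larger eigenvalue, the factor $e^{\frac12\la^2}$ in the integrand cancels the Gaussian weight $e^{-\frac12\la^2}$ coming from $Q_2$, while the weight $e^{-\frac12\la_1^2}$ remains; and on the domain $\la_1\le\la$ we have $|\la_1-\la|=\la-\la_1$. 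Hence the expectation equals
\[
\frac{1}{2\sqrt\pi}\int_{-\infty}^\infty e^{-a(\la-b)^2}\left(\int_{-\infty}^{\la}e^{-\frac12\la_1^2}(\la-\la_1)\,d\la_1\right)d\la .
\]

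Next I would compute the inner integral explicitly. Splitting $\la-\la_1$ and using $\int_{-\infty}^{\la}e^{-\frac12\la_1^2}\,d\la_1=G_{1/2}(\la)$ together with $\int_{-\infty}^{\la}\la_1e^{-\frac12\la_1^2}\,d\la_1=-e^{-\frac12\la^2}$ shows the inner integral is $\la G_{1/2}(\la)+e^{-\frac12\la^2}$. Substituting this back and writing $\la=(\la-b)+b$ inside $\la G_{1/2}(\la)$, the outer integral breaks into three pieces: a purely Gaussian term $\int e^{-a(\la-b)^2}e^{-\frac12\la^2}\,d\la$, a term $b\int e^{-a(\la-b)^2}G_{1/2}(\la)\,d\la$, and an odd term $\int(\la-b)e^{-a(\la-b)^2}G_{1/2}(\la)\,d\la$.

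The first two pieces are immediate: completing the square gives the Gaussian term $e^{-ab^2/(2a+1)}\sqrt{2\pi/(2a+1)}$, and the second piece is exactly the first identity of Lemma~\ref{Lemma:G} with $\gamma=\frac12$, $\alpha=a$, $\beta=b$, contributing $b\pi\sqrt{2/a}\,\Phi\!\big(b\sqrt{2a}/\sqrt{2a+1}\big)$. For the odd term I would integrate by parts in $\la$, using that $-\frac{1}{2a}e^{-a(\la-b)^2}$ is an antiderivative of $(\la-b)e^{-a(\la-b)^2}$ and that the boundary terms vanish since $G_{1/2}$ is bounded; this collapses the odd term to $\frac{1}{2a}\int e^{-a(\la-b)^2}e^{-\frac12\la^2}\,d\la$, i.e. $\frac{1}{2a}$ times the first piece.

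Finally I would collect the contributions: the exponential terms combine with coefficient $1+\frac{1}{2a}=\frac{2a+1}{2a}$, after which dividing by $c_2=2\sqrt\pi$ and simplifying constants (in particular $\sqrt{2\pi}/(2\sqrt\pi)=1/\sqrt2$ and $\sqrt{2(2a+1)}=\sqrt{4a+2}$) yields the stated identity. The calculation is entirely elementary; the only step needing a little care is the integration by parts on the odd term, since that is precisely what allows every $G_{1/2}$-integral to be traded either for a plain Gaussian integral or for the single $\Phi$-value provided by Lemma~\ref{Lemma:G}.
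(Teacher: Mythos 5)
Your proposal is correct and follows essentially the same route as the paper: reduce to the two-variable ordered-eigenvalue integral, evaluate the inner integral to get $\la G_{1/2}(\la)+e^{-\la^2/2}$, integrate by parts on the odd piece, and apply the first identity of Lemma~\ref{Lemma:G}. Your intermediate expression $\tfrac{\sqrt{2\pi(2a+1)}}{2a}e^{-ab^2/(2a+1)}+b\int G_{1/2}(\la_2)e^{-a(\la_2-b)^2}d\la_2$ and the final simplification with $c_2=2\sqrt\pi$ agree exactly with the paper's computation.
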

\begin{proof}\ By (\ref{Eq:GOE density}) and integration by parts,
\begin{equation*}
\begin{split}
&c_{N+1}\E_{GOE}^{N+1}\left\{ \exp\left[\frac{1}{2}\la_{N+1}^2 -a(\la_{N+1}-b)^2 \right] \right\}
= \int_{-\infty}^\infty e^{-a(\la_2-b)^2} d\la_2 \int_{-\infty}^{\la_2} e^{-\frac{\la_1^2}{2}}(\la_2-\la_1) d\la_1\\
&\quad =b\int_{-\infty}^\infty G_{1/2}(\la_2) e^{-a(\la_2-b)^2} d\la_2  + \frac{\sqrt{2\pi(2a+1)}}{2a}e^{-\frac{ab^2}{2a+1}}.
\end{split}
\end{equation*}
Applying Lemma \ref{Lemma:G} and noting that $c_{N+1} = 2\sqrt{\pi}$ when $N=1$, we obtain the desired result.
\end{proof}

\begin{lemma}\label{Lemma:GOE expectation for N=2}
Let $N=2$. Then for constants $a>0$ and $b\in \R$,
\begin{equation*}
\begin{split}
&\quad \E_{GOE}^{N+1}\left\{ \exp\left[\frac{1}{2}\la_{N+1}^2 -a(\la_{N+1}-b)^2 \right] \right\} \\
&=\left(\frac{1}{a} + 2b^2-1\right)\frac{1}{\sqrt{2a}}\Phi\left(\frac{b\sqrt{2a}}{\sqrt{a+1}} \right) + \frac{b\sqrt{a+1}}{\sqrt{2\pi}a}e^{-\frac{ab^2}{a+1}} \\
&\quad + \frac{\sqrt{2}}{\sqrt{2a+1}}e^{-\frac{ab^2}{2a+1}}\Phi\left(\frac{\sqrt{2}ab}{\sqrt{(2a+1)(a+1)}} \right).
\end{split}
\end{equation*}
\end{lemma}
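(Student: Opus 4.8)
The plan is to follow the route of the proof of Lemma \ref{Lemma:GOE expectation for N=1}, now with three ordered eigenvalues $\la_1\le\la_2\le\la_3$, so that one extra layer of integration appears. Using the density \eqref{Eq:GOE density} with $N+1=3$, the factor $e^{\frac12\la_3^2}$ cancels $e^{-\frac12\la_3^2}$, and since $c_3=\sqrt2\,\pi$ by \eqref{Eq:normalization constant}, we must evaluate
\begin{equation*}
\sqrt2\,\pi\,\E_{GOE}^{3}\{\cdots\}=\int_{-\infty}^\infty e^{-a(\la_3-b)^2}d\la_3\int_{-\infty}^{\la_3}e^{-\la_2^2/2}d\la_2\int_{-\infty}^{\la_2}e^{-\la_1^2/2}(\la_2-\la_1)(\la_3-\la_1)(\la_3-\la_2)\,d\la_1.
\end{equation*}
Writing the Vandermonde factor as $(\la_3-\la_2)\bigl[\la_1^2-(\la_2+\la_3)\la_1+\la_2\la_3\bigr]$, the innermost integral reduces to $\int_{-\infty}^{y}\la^k e^{-\la^2/2}d\la$ for $k=0,1,2$, each expressible through $G_{1/2}(y)$ and $e^{-y^2/2}$ (integrating by parts when $k=2$); after simplification the $\la_1$-integral collapses to $(1+\la_2\la_3)G_{1/2}(\la_2)+\la_3 e^{-\la_2^2/2}$.

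Next I would integrate $e^{-\la_2^2/2}(\la_3-\la_2)\bigl[(1+\la_2\la_3)G_{1/2}(\la_2)+\la_3 e^{-\la_2^2/2}\bigr]$ over $(-\infty,\la_3)$. The (polynomial)$\,\times\,e^{-\la_2^2}$ terms integrate elementarily into $G_1(\la_3)$ and $e^{-\la_3^2}$, while the (polynomial)$\,\times\,e^{-\la_2^2/2}G_{1/2}(\la_2)$ terms are handled by integration by parts, using $G_{1/2}'(t)=e^{-t^2/2}$ and $e^{-t^2/2}\cdot e^{-t^2/2}=e^{-t^2}$; the boundary contributions vanish at $-\infty$. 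The only potentially non-elementary indefinite integral that could survive, $\int_{-\infty}^{\la_3}e^{-t^2/2}G_{1/2}(t)\,dt$, in fact cancels, so Lemma \ref{Lemma:G} is ultimately invoked only through its first identity. The net effect is that the double inner integral equals $e^{-\la_3^2/2}G_{1/2}(\la_3)+(2\la_3^2-1)G_1(\la_3)+\la_3 e^{-\la_3^2}$.

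It then remains to multiply by $e^{-a(\la_3-b)^2}$ and integrate over $\R$, one summand at a time. For $e^{-\la_3^2/2}G_{1/2}(\la_3)$, completing the square gives $-a(\la_3-b)^2-\la_3^2/2=-(a+\tfrac12)\bigl(\la_3-\tfrac{2ab}{2a+1}\bigr)^2-\tfrac{ab^2}{2a+1}$, and Lemma \ref{Lemma:G} with $(\alpha,\beta,\gamma)=(a+\tfrac12,\tfrac{2ab}{2a+1},\tfrac12)$ produces the $\Phi\bigl(\sqrt2\,ab/\sqrt{(2a+1)(a+1)}\bigr)$ term. The summand $\la_3 e^{-a(\la_3-b)^2-\la_3^2}$ is a first Gaussian moment. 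The summand $(2\la_3^2-1)G_1(\la_3)$ is the main computation: I would rewrite $\la_3^2e^{-a(\la_3-b)^2}$ as a linear combination of $e^{-a(\la_3-b)^2}$ and its first two $\la_3$-derivatives, integrate by parts against $G_1(\la_3)$ so that $G_1'(\la_3)=e^{-\la_3^2}$ sends every term either to a plain Gaussian integral (giving the $e^{-ab^2/(a+1)}$ pieces) or to $\int e^{-a(\la_3-b)^2}G_1(\la_3)\,d\la_3$, again handled by Lemma \ref{Lemma:G} with $(\alpha,\beta,\gamma)=(a,b,1)$ (giving the $\Phi\bigl(b\sqrt{2a}/\sqrt{a+1}\bigr)$ term). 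Dividing the sum by $c_3=\sqrt2\,\pi$ and simplifying yields the claimed identity.

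The difficulty is bookkeeping rather than conceptual: tracking the polynomial coefficients through the two nested integrations and the integration-by-parts boundary terms, checking the cancellation of the $\int^{\la_3}e^{-t^2/2}G_{1/2}(t)\,dt$ term, and --- in the last step --- combining the exponential contributions coming from the second and third summands, which requires an identity like $\frac{2a+1}{a}+a=\frac{(a+1)^2}{a}$ to consolidate the $e^{-ab^2/(a+1)}$ coefficient into $b\sqrt{a+1}/(\sqrt{2\pi}\,a)$.
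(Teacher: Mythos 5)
Your proposal is correct and follows essentially the same route as the paper's proof: it reduces the triple integral to the same intermediate expression $(2\la_3^2-1)G_1(\la_3)+e^{-\la_3^2/2}G_{1/2}(\la_3)+\la_3 e^{-\la_3^2}$ and then finishes via integration by parts and the first identity of Lemma \ref{Lemma:G}, which is exactly what the paper does (only more tersely). Your intermediate claims (the collapse of the $\la_1$-integral to $(1+\la_2\la_3)G_{1/2}(\la_2)+\la_3 e^{-\la_2^2/2}$, the cancellation of the $G_{1/2}^2$ terms, and the consolidation identity $\frac{2a+1}{a}+a=\frac{(a+1)^2}{a}$) all check out.
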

\begin{proof}\ By (\ref{Eq:GOE density}) and integration by parts,
\begin{equation*}
\begin{split}
&\quad c_{N+1}\E_{GOE}^{N+1}\left\{ \exp\left[\frac{1}{2}\la_{N+1}^2 -a(\la_{N+1}-b)^2 \right] \right\}\\
&=\int_{-\infty}^\infty e^{-a(\la_3-b)^2} d\la_3 \int_{-\infty}^{\la_3} e^{-\frac{\la_2^2}{2}}(\la_3-\la_2) d\la_2 \int_{-\infty}^{\la_2} e^{-\frac{\la_1^2}{2}}(\la_2-\la_1)(\la_3-\la_1) d\la_1\\
&= \int_{-\infty}^\infty e^{-a(\la_3-b)^2} \left[ (2\la_3^2-1)G_1(\la_3) + e^{-\frac{\la_3^2}{2}} G_{1/2}(\la_3) +  \la_3e^{-\la_3^2} \right]d\la_3.
\end{split}
\end{equation*}
Applying integration by parts and Lemma \ref{Lemma:G}, and noting that $c_{N+1} = \sqrt{2}\pi$ when $N=2$, we obtain the desired result.
\end{proof}

\begin{lemma}\label{Lemma:GOE expectation for N=3}
Let $N=3$. Then for constants $a>0$ and $b\in \R$,
\begin{equation*}
\begin{split}
&\quad \E_{GOE}^{N+1}\left\{ \exp\left[\frac{1}{2}\la_{N+1}^2 -a(\la_{N+1}-b)^2 \right] \right\} \\
& =\left[\frac{24a^3+12a^2+6a+1}{2a(2a+1)^2}b^2 + \frac{6a^2+3a+2}{4a^2(2a+1)} + \frac{3}{2}\right]\frac{1}{\sqrt{2(2a+1)}}e^{-\frac{ab^2}{2a+1}}\Phi\left(\frac{2\sqrt{2}ab}{\sqrt{(2a+1)(2a+3)}} \right)\\
& \quad+ \left[\frac{a+1}{2a}b^2 + \frac{1-a}{2a^2} -1\right] \frac{1}{\sqrt{2(a+1)}} e^{-\frac{ab^2}{a+1}}\Phi\left(\frac{\sqrt{2}ab}{\sqrt{(a+1)(2a+3)}} \right)\\
&\quad+\left(6a+1+\frac{28a^2+12a+3}{2a(2a+1)}\right)\frac{b}{2\sqrt{2\pi}(2a+1)\sqrt{2a+3}}e^{-\frac{3ab^2}{2a+3}}\\
& \quad+ \left[ b^2+\frac{3(1-a)}{2a}\right]\frac{\sqrt{\pi}b}{\sqrt{2a}} \left[\Phi_{\Sigma_1}(0,b) + \Phi_{\Sigma_2}(0,b)\right],
\end{split}
\end{equation*}
where
\begin{equation*}
\begin{split}
\Sigma_1 = \left(
\begin{array}{cc}
\frac{3}{2} & -\frac{1}{2} \\
-\frac{1}{2} &  \frac{1+a}{2a} \\
\end{array}
\right), \quad  \Sigma_2 = \left(
\begin{array}{cc}
\frac{3}{2} & -1 \\
-1 & \frac{1+2a}{2a} \\
\end{array}
\right).
\end{split}
\end{equation*}
\end{lemma}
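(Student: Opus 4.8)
The plan is to follow the same template established in the proofs of Lemmas~\ref{Lemma:GOE expectation for N=1} and \ref{Lemma:GOE expectation for N=2}, now with $N=3$. First I would use the eigenvalue density \eqref{Eq:GOE density} for $M_4$ to write
\[
c_{N+1}\E_{GOE}^{N+1}\left\{ \exp\left[\tfrac{1}{2}\la_{N+1}^2 -a(\la_{N+1}-b)^2 \right] \right\}
= \int_{-\infty}^\infty e^{-a(\la_4-b)^2} d\la_4 \int_{-\infty}^{\la_4}\!\!\int_{-\infty}^{\la_3}\!\!\int_{-\infty}^{\la_2} e^{-\frac{1}{2}(\la_1^2+\la_2^2+\la_3^2)} \prod_{1\le i<j\le 3}(\la_j-\la_i)\,(\la_4-\la_1)(\la_4-\la_2)(\la_4-\la_3)\, d\la_1 d\la_2 d\la_3,
\]
where the $e^{\frac12\la_4^2}$ factor has cancelled the Gaussian weight on $\la_4$ and where $c_{N+1}=c_4$ can be read off from \eqref{Eq:normalization constant}. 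The inner triple integral over $\la_1\le\la_2\le\la_3\le\la_4$ is a polynomial in $\la_4$ times Gaussian-type pieces; expanding the Vandermonde factor $(\la_2-\la_1)(\la_3-\la_1)(\la_3-\la_2)(\la_4-\la_1)(\la_4-\la_2)(\la_4-\la_3)$ as a polynomial in $\la_1,\la_2,\la_3$ (with coefficients polynomial in $\la_4$) and integrating term by term, the innermost $\la_1$-integral produces combinations of $e^{-\la_2^2/2}$, $\la_2 e^{-\la_2^2/2}$ and $G_{1/2}(\la_2)$; iterating the $\la_2$- and $\la_3$-integrations by parts, one obtains a sum whose terms are of the form (polynomial in $\la_4$) times one of $e^{-\frac32\la_4^2}$, $e^{-\la_4^2}G_{1/2}(\la_4)$, $e^{-\frac12\la_4^2}G_1(\la_4)$, $G_{3/2}(\la_4)$, and $\big(\int_{-\infty}^{\la_4}e^{-\sigma x^2}G_\gamma(x)dx\big)$-type double integrals, mirroring exactly what happened at the $N=2$ level but one step longer.

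The second stage is the outer $\la_4$-integral against $e^{-a(\la_4-b)^2}$. Each term is handled by first removing the polynomial prefactor in $\la_4$ via repeated integration by parts (which is why powers $b^2$, $b$ and the rational functions of $a$ appear in the answer), reducing everything to the two master integrals
\[
\int_{-\infty}^\infty e^{-\alpha(x-\beta)^2} G_\gamma(x)\,dx \quad\text{and}\quad \int_{-\infty}^\infty e^{-\alpha(y-\beta)^2}\int_{-\infty}^y e^{-\sigma x^2}G_\gamma(x)\,dx\,dy,
\]
which Lemma~\ref{Lemma:G} evaluates in terms of $\Phi$ and $\Phi_\Sigma$. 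Matching the parameters: the term carried by $e^{-\frac32\la_4^2}$-type contributions eventually produces the $e^{-3ab^2/(2a+3)}$ factor; the $e^{-\la_4^2}G_{1/2}$ contribution gives $\Phi$ with covariance determined by $\alpha=a,\gamma=1/2$, i.e. the $\sqrt{2}ab/\sqrt{(2a+1)(2a+3)}$ argument after the Gaussian prefactors are combined; the $e^{-\frac12\la_4^2}G_1$ contribution gives the $2\sqrt2 ab/\sqrt{(2a+1)(2a+3)}$ argument; and the genuine double-integral terms give precisely the two $\Phi_{\Sigma_1}$ and $\Phi_{\Sigma_2}$ terms, where $\Sigma_1$ comes from applying the second identity of Lemma~\ref{Lemma:G} with $(\alpha,\sigma,\gamma)=(a,1/2,1/2)$ and $\Sigma_2$ from $(\alpha,\sigma,\gamma)=(a,1,1/2)$; one checks directly that \eqref{Eq:Sigma} with these parameters reproduces the stated $\Sigma_1,\Sigma_2$. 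Finally, collecting like terms, dividing by $c_4$, and simplifying the resulting rational functions of $a$ yields the asserted closed form.

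The main obstacle is purely the bookkeeping: the Vandermonde for four eigenvalues has many more monomials than in the $N=1,2$ cases, so the number of intermediate terms after the three inner integrations is substantially larger, and keeping track of the polynomial-in-$\la_4$ coefficients through the repeated integration by parts in the outer integral is where errors are easy to make. A useful organizing device is to split the inner integral's output according to which of the five "kernel types" ($e^{-\frac32\la_4^2}$, $e^{-\la_4^2}G_{1/2}$, $e^{-\frac12\la_4^2}G_1$, $G_{3/2}$, double-integral) it lands in, process each type separately through Lemma~\ref{Lemma:G}, and only at the very end combine the Gaussian exponential prefactors $e^{-ab^2/(2a+1)}$, $e^{-ab^2/(a+1)}$ etc.\ with the leftover normalizing powers of $\pi$ and $2$. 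No genuinely new idea beyond Lemmas~\ref{Lemma:G}--\ref{Lemma:GOE expectation for N=2} is needed; the result follows by carrying the same computation one dimension further and simplifying.
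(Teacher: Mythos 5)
Your outline reproduces the paper's proof step for step: the quadruple integral from \eqref{Eq:GOE density}, successive integration by parts over $\la_1,\la_2,\la_3$ producing polynomial-in-$\la_4$ coefficients times the kernels $e^{-3\la_4^2/2}$, $e^{-\la_4^2}G_{1/2}(\la_4)$, $e^{-\la_4^2/2}G_1(\la_4)$ and two genuine double integrals, followed by a final application of Lemma~\ref{Lemma:G} with $c_4=2\pi$. However, three of your concrete identifications are off. (i) The double integrals that actually occur are $\int e^{-a(y-b)^2}\int_{-\infty}^{y}e^{-x^2}G_{1/2}(x)\,dx\,dy$ and $\int e^{-a(y-b)^2}\int_{-\infty}^{y}e^{-x^2/2}G_{1}(x)\,dx\,dy$, i.e.\ $(\alpha,\sigma,\gamma)=(a,1,\tfrac12)$, which by \eqref{Eq:Sigma} gives $\Sigma_1$, and $(\alpha,\sigma,\gamma)=(a,\tfrac12,1)$, which gives $\Sigma_2$. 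Your choice $(a,\tfrac12,\tfrac12)$ yields top-left entry $(\sigma+\gamma)/(2\sigma\gamma)=2$ rather than $3/2$, so it matches neither stated covariance, and your $(a,1,\tfrac12)$ gives $\Sigma_1$, not $\Sigma_2$. (ii) The $e^{-\la_4^2}G_{1/2}$ kernel, after completing the square to $e^{-(a+1)(\la_4-ab/(a+1))^2}$, produces the argument $\sqrt{2}\,ab/\sqrt{(a+1)(2a+3)}$, not $\sqrt{2}\,ab/\sqrt{(2a+1)(2a+3)}$. (iii) No bare $G_{3/2}(\la_4)$ kernel survives the inner integrations (every $\Phi$ in the final formula carries an exponential prefactor), so that fifth kernel type is spurious. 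These are bookkeeping slips rather than a wrong strategy, but the assertion that one ``checks directly'' that your parameters reproduce $\Sigma_1,\Sigma_2$ is false as written and must be corrected before the argument closes.
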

\begin{proof}\ By (\ref{Eq:GOE density}) and integration by parts,
\begin{equation*}
\begin{split}
&\quad c_{N+1}\E_{GOE}^{N+1}\left\{ \exp\left[\frac{1}{2}\la_{N+1}^2 -a(\la_{N+1}-b)^2 \right] \right\}\\
&=\int_{-\infty}^\infty e^{-a(\la_4-b)^2} d\la_4 \int_{-\infty}^{\la_4} e^{-\frac{\la_3^2}{2}}(\la_4-\la_3) d\la_3
\int_{-\infty}^{\la_3} e^{-\frac{\la_2^2}{2}}(\la_4-\la_2)(\la_3-\la_2) d\la_2 \\
&\quad \int_{-\infty}^{\la_2} e^{-\frac{\la_1^2}{2}}(\la_2-\la_1)(\la_3-\la_1)(\la_4-\la_1) d\la_1\\
&=\left[(3-\frac{1}{4a^2})\frac{2a+1+4a^2b^2}{(2a+1)^2} + (\frac{b}{4a^2} + \frac{3b}{2a})\frac{2ab}{2a+1} + \frac{3-3a}{4a^2} + \frac{3}{2}\right]\\
&\quad \times e^{-\frac{ab^2}{2a+1}}\int_{-\infty}^\infty e^{-\frac{2a+1}{2}(\la_4-\frac{2ab}{2a+1})^2} G_1(\la_4)d\la_4\\
& \quad+ \left[\frac{3b}{2a}+ b^3-\frac{3}{2}b\right]\int_{-\infty}^\infty e^{-a(\la_4-b)^2} \int_{-\infty}^{\la_4} e^{-\frac{\la_3^2}{2}} G_1(\la_3)d\la_3d\la_4\\
& \quad+ \left[(\frac{1}{2}-\frac{1}{2a^2})\frac{a+1+2a^2b^2}{2(a+1)^2} + (\frac{b}{2a^2} + \frac{3b}{2a})\frac{ab}{a+1} + \frac{3-3a}{4a^2} -1\right]\\
&\quad \times e^{-\frac{ab^2}{a+1}}\int_{-\infty}^\infty  e^{-(a+1)(\la_4-\frac{ab}{a+1})^2} G_{1/2}(\la_4)d\la_4\\
& \quad+ \left[\frac{3b}{2a}+ b^3-\frac{3}{2}b\right]\int_{-\infty}^\infty e^{-a(\la_4-b)^2}  \int_{-\infty}^{\la_4} e^{-\la_3^2}G_{1/2}(\la_3)d\la_3d\la_4\\
& \quad+ \int_{-\infty}^\infty  \left[\frac{(6a+1)(2a+3)}{4a(2a+1)}\la_4 + \frac{(28a^2+12a+3)b}{4a(2a+1)^2}\right]e^{-a(\la_4-b)^2}e^{-\frac{3\la_4^2}{2}}d\la_4.
\end{split}
\end{equation*}
Applying integration by parts and Lemma \ref{Lemma:G}, and noting that $c_{N+1} = 2\pi$ when $N=3$, we obtain the desired result.
\end{proof}

\section{Isotropic Gaussian Random Fields on Euclidean Space}
In this section, we consider smooth isotropic Gaussian fields on Euclidean space. Let $\{f(t): t\in \R^N\}$ be a real-valued, $C^2$, centered, unit-variance isotropic Gaussian field. Due to isotropy, we can write the covariance function of the field as $\E\{f(t)f(s)\}=\rho(\|t-s\|^2)$ for an appropriate function $\rho(\cdot): [0,\infty) \rightarrow \R$, and denote
\begin{equation}\label{Eq:kappa}
\rho'=\rho'(0), \quad \rho''=\rho''(0), \quad \kappa=-\rho'/\sqrt{\rho''}.
\end{equation}
Let $f_i (t)=\frac{\partial f(t)}{\partial t_i}$ and $f_{ij}(t)=\frac{\partial^2 f(t)}{\partial t_i\partial t_j}$. Denote by $\nabla f(t)$ and $\nabla^2 f(t)$ the column vector $(f_1(t), \ldots , f_N(t))^{T}$ and the $N\times N$ matrix $(f_{ij}(t))_{ i, j = 1, \ldots, N}$, respectively. By isotropy again, the covariance of $(f(t), \nabla f(t), \nabla^2 f(t))$ only depends on $\rho'$ and $\rho''$. In particular, ${\rm Var}(f_i(t))=-2\rho'$ and ${\rm Var}(f_{ii}(t))=12\rho''$ for any $i\in\{1,\ldots, N\}$, which implies $\rho'<0$ and $\rho''>0$ and hence $\kappa>0$.

Over the unit open cube $(0,1)^N$ in $\R^N$, define respectively the expected number of local maxima of $f$ and the expected number of local maxima of $f$ exceeding level $u$ as
\begin{equation*}
\begin{split}
M(f) & = \# \left\{ t \in (0,1)^N: \nabla f(t)=0, \text{index} (\nabla^2 f(t))=N \right\},\\
M_u(f) & = \# \left\{ t \in (0,1)^N: f(t)\geq u, \nabla f(t)=0, \text{index} (\nabla^2 f(t))=N \right\},
\end{split}
\end{equation*}
where the index of a matrix is regarded as the number of its negative eigenvalues. Define the height distribution of a local maximum of $f$ at some point, say the origin, as
\begin{equation*}
F(u) :=\lim_{\ep\to 0} \P\left\{f(0)>u | \exists \text{ a local maximum of } f(t) \text{ in } (-\ep, \ep)^N \right\}.
\end{equation*}
Then we have the following result in Cheng and Schwartzman (2015).
\begin{theorem}\label{Thm:Palm distr iso Euclidean} Let $\{f(t): t\in \R^N\}$ be a centered, unit-variance, isotropic Gaussian random field satisfying the conditions $({\bf C}1)$, $({\bf C}2)$ and $({\bf C}3)$ in Cheng and Schwartzman (2015). Then for each $u\in \R$,
\begin{equation*}
\begin{split}
\E\{M(f)\} &=\left(\frac{2}{\pi}\right)^{(N+1)/2}\Gamma\left(\frac{N+1}{2}\right)\left(-\frac{\rho''}{\rho'}\right)^{N/2}\E_{GOE}^{N+1}\left\{ \exp\left[-\frac{\la_{N+1}^2}{2} \right] \right\}
\end{split}
\end{equation*}
and
\begin{equation}\label{Eq:F-Euclidean}
\begin{split}
F(u) &= \frac{\E\{M_u(f)\}}{\E\{M(f)\}}\\
&= \frac{(1-\kappa^2)^{-1/2} \int_u^\infty \phi(x)\E_{GOE}^{N+1}\left\{ \exp\left[\la_{N+1}^2/2 - (\la_{N+1}-\kappa x/\sqrt{2} )^2/(1-\kappa^2) \right]\right\}dx}{\E_{GOE}^{N+1}\left\{ \exp\left[-\la_{N+1}^2/2 \right] \right\}},
\end{split}
\end{equation}
where $\phi(x)$ is the density of standard Gaussian variable and $\rho'$, $\rho''$ and $\kappa$ are defined in \eqref{Eq:kappa}.
\end{theorem}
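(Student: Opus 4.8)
\ The plan is to derive both displays from the Kac--Rice metatheorem for the expected number of critical points of a prescribed index (Adler and Taylor (2007); Cheng and Schwartzman (2015)), whose hypotheses are precisely $({\bf C}1)$ and $({\bf C}2)$. Writing $p_{\nabla f(t)}$ for the density of the gradient, it gives
\begin{equation*}
\E\{M_u(f)\}=\int_{(0,1)^N}\E\left\{|\det\nabla^2 f(t)|\,\mathbbm{1}_{\{{\rm index}(\nabla^2 f(t))=N\}}\,\mathbbm{1}_{\{f(t)\ge u\}}\ \Big|\ \nabla f(t)=0\right\}p_{\nabla f(t)}(0)\,dt,
\end{equation*}
and $\E\{M(f)\}$ is the special case $u=-\infty$, with the last indicator absent. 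By stationarity the integrand does not depend on $t$, and $(0,1)^N$ has volume $1$, so it suffices to evaluate it at the origin. For a $C^2$ isotropic field, $\nabla f(0)$ has i.i.d.\ $\mathcal N(0,-2\rho')$ coordinates and is independent of the pair $(f(0),\nabla^2 f(0))$, so $p_{\nabla f(0)}(0)=(-4\pi\rho')^{-N/2}$ and the conditioning on $\nabla f(0)=0$ simply disappears.

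It then remains to evaluate $\E\{|\det\nabla^2 f(0)|\,\mathbbm{1}_{\{{\rm index}=N\}}\,\mathbbm{1}_{\{f(0)\ge u\}}\}$. From the isotropic covariances ${\rm Var}(f_{ii})=12\rho''$, ${\rm Var}(f_{ij})=\E\{f_{ii}f_{jj}\}=4\rho''$ ($i\ne j$), and $\E\{f(0)f_{ij}(0)\}=2\rho'\delta_{ij}$, one has the standard representation (cf.\ Fyodorov (2004); Cheng and Schwartzman (2015))
\begin{equation*}
\nabla^2 f(0)\ \big|\ \{f(0)=x\}\ \stackrel{d}{=}\ \sqrt{8\rho''}\,\big(M_N-\lambda_{N+1}I_N\big),\qquad \lambda_{N+1}\sim\mathcal N\!\left(\frac{\kappa x}{\sqrt{2}},\ \frac{1-\kappa^2}{2}\right),
\end{equation*}
where $M_N$ is a GOE matrix independent of the scalar $\lambda_{N+1}$; the extra $I_N$--direction term and the constant $\kappa=-\rho'/\sqrt{\rho''}$ appear because the Hessian of a merely isotropic field is GOE plus a multiple of $I_N$ correlated with $f(0)$, and condition $({\bf C}3)$, i.e.\ $\kappa\le1$, is what makes the conditional variance $(1-\kappa^2)/2$ nonnegative. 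On the event $\{{\rm index}=N\}=\{M_N-\lambda_{N+1}I_N\prec0\}=\{\lambda_N<\lambda_{N+1}\}$ (with $\lambda_1\le\cdots\le\lambda_N$ the eigenvalues of $M_N$) one has $|\det(M_N-\lambda_{N+1}I_N)|=\prod_{i=1}^N(\lambda_{N+1}-\lambda_i)$, hence
\begin{equation*}
\E\{|\det\nabla^2 f(0)|\,\mathbbm{1}_{\{{\rm index}=N\}}\,\mathbbm{1}_{\{f(0)\ge u\}}\}=(8\rho'')^{N/2}\int_u^\infty\phi(x)\,\E\Big\{\prod_{i=1}^N(\lambda_{N+1}-\lambda_i)\,\mathbbm{1}_{\{\lambda_N<\lambda_{N+1}\}}\Big\}\,dx,
\end{equation*}
the inner expectation being over $M_N$ and the independent Gaussian $\lambda_{N+1}$.

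The core of the argument is to rewrite this inner expectation as an $(N+1)$-dimensional GOE expectation. Conditioning on $\lambda_{N+1}$ and substituting the eigenvalue density \eqref{Eq:GOE density}, on $\{\lambda_N<\lambda_{N+1}\}$ the Vandermonde $\prod_{1\le i<j\le N}|\lambda_i-\lambda_j|$ times $\prod_{i=1}^N(\lambda_{N+1}-\lambda_i)$ equals $\prod_{1\le i<j\le N+1}|\lambda_i-\lambda_j|$; inserting the missing Gaussian weight $e^{-\lambda_{N+1}^2/2}$, compensated by $e^{+\lambda_{N+1}^2/2}$, converts $Q_N$ into $Q_{N+1}$ restricted to $\lambda_1\le\cdots\le\lambda_{N+1}$, and then integrating $\lambda_{N+1}$ against its $\mathcal N(\kappa x/\sqrt{2},(1-\kappa^2)/2)$ density gives
\begin{equation*}
\E\Big\{\prod_{i=1}^N(\lambda_{N+1}-\lambda_i)\,\mathbbm{1}_{\{\lambda_N<\lambda_{N+1}\}}\Big\}=\frac{c_{N+1}}{c_N}\,\frac{1}{\sqrt{\pi(1-\kappa^2)}}\,\E_{GOE}^{N+1}\Big\{\exp\Big[\frac{\lambda_{N+1}^2}{2}-\frac{(\lambda_{N+1}-\kappa x/\sqrt{2})^2}{1-\kappa^2}\Big]\Big\},
\end{equation*}
where $c_{N+1}/c_N=\sqrt{2}\,\Gamma((N+1)/2)$ by \eqref{Eq:normalization constant} and $\Gamma(z+1)=z\Gamma(z)$. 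Multiplying by $(8\rho'')^{N/2}$ and $(-4\pi\rho')^{-N/2}$ collapses the constants to exactly $(2/\pi)^{(N+1)/2}\Gamma((N+1)/2)(-\rho''/\rho')^{N/2}(1-\kappa^2)^{-1/2}$, which yields the numerator of \eqref{Eq:F-Euclidean}, i.e.\ $\E\{M_u(f)\}$. Finally, taking $u=-\infty$ and using Fubini, the remaining integral $\int_{\R}\phi(x)\,\E_{GOE}^{N+1}\{\exp[\lambda_{N+1}^2/2-(\lambda_{N+1}-\kappa x/\sqrt{2})^2/(1-\kappa^2)]\}\,dx$ is elementary: completing the square in $x$ --- the same manipulation underlying $G_\gamma$ in \eqref{Eq:G} --- pulls out a factor $\sqrt{1-\kappa^2}$ and reduces the exponent to $-\lambda_{N+1}^2/2$, giving the stated formula for $\E\{M(f)\}$.

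The relation $F(u)=\E\{M_u(f)\}/\E\{M(f)\}$ is the Palm-type identity obtained by writing both $\P\{f(0)>u,\ \exists\ \text{a local maximum of }f\text{ in }(-\ep,\ep)^N\}$ and $\P\{\exists\ \text{a local maximum of }f\text{ in }(-\ep,\ep)^N\}$ as $(2\ep)^N$ times the corresponding Kac--Rice intensities plus $o(\ep^N)$ as $\ep\to0$ (a.s.\ there is at most one critical point in a small cube, and its height tends to $f(0)$), so the common factor $(2\ep)^N$ --- together with the dimensional prefactor --- cancels in the quotient; this limit is justified in Cheng and Schwartzman (2015). I expect the technical heart of the proof to be the passage from the ${\rm GOE}_N$ integral over the negative-definite cone to the ${\rm GOE}_{N+1}$ expectation while keeping every normalization constant ($p_{\nabla f(0)}(0)$, $(8\rho'')^{N/2}$, $c_N$, $c_{N+1}$ and the Gaussian normalizer) correctly accounted for; a secondary delicate point is the conditional Hessian representation itself, since it is precisely the $I_N$--direction Gaussian component --- once fused with the density $\phi(x)$ --- that supplies the term $(\lambda_{N+1}-\kappa x/\sqrt{2})^2/(1-\kappa^2)$ and the prefactor $(1-\kappa^2)^{-1/2}$ in \eqref{Eq:F-Euclidean}.
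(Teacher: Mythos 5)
The paper states Theorem \ref{Thm:Palm distr iso Euclidean} without proof, importing it from Cheng and Schwartzman (2015); your reconstruction follows exactly the argument of that reference --- Kac--Rice, the representation of the conditional Hessian as $\sqrt{8\rho''}$ times a GOE matrix minus a correlated Gaussian multiple of $I_N$, and the absorption of that extra eigenvalue into a ${\rm GOE}_{N+1}$ expectation --- and all of your constants ($p_{\nabla f(0)}(0)=(-4\pi\rho')^{-N/2}$, $c_{N+1}/c_N=\sqrt{2}\,\Gamma((N+1)/2)$, and the $(1-\kappa^2)^{-1/2}$ coming from the $\mathcal{N}(\kappa x/\sqrt{2},(1-\kappa^2)/2)$ normalizer) check out. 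The proposal is therefore correct and takes essentially the same route as the source; the one step you leave as a pointer, the justification of the Palm-type limit $F(u)=\E\{M_u(f)\}/\E\{M(f)\}$, is likewise deferred to that reference by the paper itself.
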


\begin{remark}\label{Remark:kappa}
Recall the condition $({\bf C}3)$ in Cheng and Schwartzman (2015) that $\kappa\le 1$. We show below that, the factors $(1-\kappa^2)^{-1/2}$ and $(1-\kappa^2)^{-1}$ in \eqref{Eq:F-Euclidean} will be cancelled out, implying that \eqref{Eq:F-Euclidean} also holds when $\kappa=1$.  In fact, by \eqref{Eq:GOE density},
\begin{equation*}
\begin{split}
&\quad(1-\kappa^2)^{-1/2} \E_{GOE}^{N+1}\left\{ \exp\left[\la_{N+1}^2/2 - (\la_{N+1}-\kappa x/\sqrt{2} )^2/(1-\kappa^2) \right]\right\}\\
&= \frac{1}{c_{N+1}\sqrt{1-\kappa^2}} \int_{\R^{N+1}}e^{-\frac{(\la_{N+1}-\kappa x/\sqrt{2} )^2}{1-\kappa^2}}\prod_{i=1}^N e^{-\frac{1}{2}\la_i^2}d\la_i \prod_{1\leq i<j\leq N}|\la_i-\la_j|\\
&\quad \times \prod_{1\leq k \leq N}|\la_k-\la_{N+1}|\mathbbm{1}_{\{\la_1\leq\ldots\leq\la_N \le \la_{N+1}\}} d\la_1\cdots d\la_N d\la_{N+1}\\
&=\frac{1}{c_{N+1}} \int_{\R^{N+1}}e^{-\wt{\la}_{N+1}^2}\prod_{i=1}^N e^{-\frac{1}{2}\la_i^2}d\la_i \prod_{1\leq i<j\leq N}|\la_i-\la_j|\prod_{1\leq k \leq N}|\la_k-\sqrt{1-\kappa^2}\wt{\la}_{N+1}-\kappa x/\sqrt{2}|\\
&\quad \times \mathbbm{1}_{\{\la_1\leq\ldots\leq\la_N \le \sqrt{1-\kappa^2}\wt{\la}_{N+1}+\kappa x/\sqrt{2}\}} d\la_1\cdots d\la_Nd\wt{\la}_{N+1},
\end{split}
\end{equation*}
where we have made change of variables $\wt{\la}_{N+1}=(\la_{N+1}-\kappa x/\sqrt{2} )/\sqrt{1-\kappa^2}$.
\end{remark}

From now on, we denote by $h$ the density of the height distribution of local maxima of $f$, i.e. $h(x)=-F'(x)$ or $F(x)=\int_x^\infty h(t)dt$. Also notice that, due to the formula $\E\{M_u(f)\}=F(u)\E\{M(f)\}$ in \eqref{Eq:F-Euclidean}, we only need to find $F(u)$ and $\E\{M(f)\}$.

\begin{proposition}\label{Prop:Euclidean}
\ Let the assumptions in Theorem \ref{Thm:Palm distr iso Euclidean} hold. If $N=1$, then
\begin{equation*}
\E\{M(f)\} = \frac{\sqrt{6}}{2\pi}\sqrt{-\frac{\rho''}{\rho'}}
\end{equation*}
and
\begin{equation*}
\begin{split}
h(x)&=\frac{\sqrt{3-\kappa^2}}{\sqrt{6\pi}} e^{-\frac{3x^2}{2(3-\kappa^2)}} + \frac{2\kappa x\sqrt{\pi}}{\sqrt{6}}\phi(x)\Phi\left(\frac{\kappa x}{\sqrt{3-\kappa^2}} \right).
\end{split}
\end{equation*}
If $N=2$, then
\begin{equation*}
\E\{M(f)\} = -\frac{\rho''}{\sqrt{3}\pi\rho'}
\end{equation*}
and
\begin{equation*}
\begin{split}
h(x) &=\sqrt{3}\kappa^2(x^2-1)\phi(x)\Phi\left(\frac{\kappa x}{\sqrt{2-\kappa^2}} \right) + \frac{\kappa x\sqrt{3(2-\kappa^2)}}{2\pi}e^{-\frac{x^2}{2-\kappa^2}} \\
&\quad+\frac{\sqrt{6}}{\sqrt{\pi(3-\kappa^2)}}e^{-\frac{3x^2}{2(3-\kappa^2)}}\Phi\left(\frac{\kappa x}{\sqrt{(3-\kappa^2)(2-\kappa^2)}} \right).
\end{split}
\end{equation*}
If $N=3$, then
\begin{equation*}
\E\{M(f)\} = \frac{29\sqrt{6}-36}{36\pi^2}\left(-\frac{\rho''}{\rho'}\right)^{3/2}
\end{equation*}
and
\begin{equation*}
\begin{split}
h(x) &=\frac{144\phi(x)}{29\sqrt{6}-36}\Bigg\{\Bigg[\frac{\kappa^2\left[(1-\kappa^2)^3+6(1-\kappa^2)^2+12(1-\kappa^2)+24\right]}
{4(3-\kappa^2)^2}x^2 \\
&\quad+ \frac{2(1-\kappa^2)^3+3(1-\kappa^2)^2+6(1-\kappa^2)}{4(3-\kappa^2)} + \frac{3}{2}\Bigg] \frac{e^{-\frac{\kappa^2x^2}{2(3-\kappa^2)}}}{\sqrt{2(3-\kappa^2)}}\Phi\left(\frac{2\kappa x}{\sqrt{(3-\kappa^2)(5-3\kappa^2)}}\right)\\
&\quad + \left[\frac{\kappa^2(2-\kappa^2)}{4}x^2 - \frac{\kappa^2(1-\kappa^2)}{2} -1\right] \frac{e^{-\frac{\kappa^2x^2}{2(2-\kappa^2)}}}{\sqrt{2(2-\kappa^2)}}\Phi\left(\frac{\kappa x} {\sqrt{(2-\kappa^2)(5-3\kappa^2)}}\right)\\
&\quad + \left[7-\kappa^2 + \frac{(1-\kappa^2)\left[3(1-\kappa^2)^2 + 12(1-\kappa^2) + 28\right]}{2(3-\kappa^2)} \right] \frac{\kappa xe^{-\frac{3\kappa^2x^2}{2(5-3\kappa^2)}}}{4\sqrt{\pi}(3-\kappa^2)\sqrt{5-3\kappa^2}}\\
&\quad + \frac{\sqrt{\pi}\kappa^3}{4}x(x^2 - 3)\left[\Phi_{\Sigma_1}(0,\kappa x/\sqrt{2}) + \Phi_{\Sigma_2}(0,\kappa x/\sqrt{2})\right]\Bigg\},
\end{split}
\end{equation*}
where
\begin{equation*}
\begin{split}
\Sigma_1 = \left(
\begin{array}{cc}
\frac{3}{2} & -1 \\
-1 & \frac{3-\kappa^2}{2} \\
\end{array}
\right), \quad \Sigma_2 = \left(
\begin{array}{cc}
\frac{3}{2} & -\frac{1}{2} \\
-\frac{1}{2} & \frac{2-\kappa^2}{2} \\
\end{array}
\right).
\end{split}
\end{equation*}
\end{proposition}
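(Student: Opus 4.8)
The plan is to obtain both assertions by substituting the closed forms of Lemmas \ref{Lemma:GOE expectation for N=1}--\ref{Lemma:GOE expectation for N=3} into the two GOE expectations that appear in Theorem \ref{Thm:Palm distr iso Euclidean}. For $\E\{M(f)\}$ the key observation is that $\exp[-\la_{N+1}^2/2]=\exp[\la_{N+1}^2/2-a(\la_{N+1}-b)^2]$ with $a=1$ and $b=0$, so that $\E_{GOE}^{N+1}\{\exp[-\la_{N+1}^2/2]\}$ is precisely the right-hand side of the relevant lemma evaluated at $a=1$, $b=0$; every summand carrying a factor $b$ then drops out and, using $\Phi(0)=1/2$, the result collapses to $\sqrt6/4$ for $N=1$, to $1/\sqrt6$ for $N=2$, and to $(29\sqrt6-36)/144$ for $N=3$. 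Multiplying by the prefactor $(2/\pi)^{(N+1)/2}\Gamma((N+1)/2)(-\rho''/\rho')^{N/2}$ of Theorem \ref{Thm:Palm distr iso Euclidean}, with $\Gamma(1)=\Gamma(2)=1$ and $\Gamma(3/2)=\sqrt\pi/2$, gives the three stated expressions for $\E\{M(f)\}$.

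For the density, differentiating \eqref{Eq:F-Euclidean} gives $h(x)=-F'(x)=(1-\kappa^2)^{-1/2}\phi(x)\,\E_{GOE}^{N+1}\{\exp[\la_{N+1}^2/2-(\la_{N+1}-\kappa x/\sqrt2)^2/(1-\kappa^2)]\}\big/\E_{GOE}^{N+1}\{\exp[-\la_{N+1}^2/2]\}$, so I would apply Lemmas \ref{Lemma:GOE expectation for N=1}--\ref{Lemma:GOE expectation for N=3} with $a=1/(1-\kappa^2)$ and $b=\kappa x/\sqrt2$. Under this choice $2a+1=(3-\kappa^2)/(1-\kappa^2)$, $a+1=(2-\kappa^2)/(1-\kappa^2)$, $2a+3=(5-3\kappa^2)/(1-\kappa^2)$, $ab^2/(2a+1)=\kappa^2x^2/[2(3-\kappa^2)]$, $ab^2/(a+1)=\kappa^2x^2/[2(2-\kappa^2)]$ and $3ab^2/(2a+3)=3\kappa^2x^2/[2(5-3\kappa^2)]$, which turns the Gaussian exponentials and the arguments of the $\Phi(\cdot)$'s into exactly those in the statement; likewise the pair of matrices $\{\Sigma_1,\Sigma_2\}$ of Lemma \ref{Lemma:GOE expectation for N=3}, evaluated at $a=1/(1-\kappa^2)$, coincides with the pair $\{\Sigma_1,\Sigma_2\}$ of the Proposition, with arguments $(0,\kappa x/\sqrt2)$. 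The prefactor $(1-\kappa^2)^{-1/2}$ then cancels the powers of $1-\kappa^2$ produced by the square roots and polynomial coefficients of the lemmas --- the cancellation already displayed in Remark \ref{Remark:kappa} --- so that $1-\kappa^2$ disappears from every denominator and the formula is in particular valid at $\kappa=1$. Dividing by the value of $\E_{GOE}^{N+1}\{\exp[-\la_{N+1}^2/2]\}$ found above (namely $\sqrt6/4$, $1/\sqrt6$, $(29\sqrt6-36)/144$) and combining the surviving Gaussian exponentials with $\phi(x)$ --- for instance $\phi(x)e^{-\kappa^2x^2/[2(3-\kappa^2)]}=(2\pi)^{-1/2}e^{-3x^2/[2(3-\kappa^2)]}$ in the first $N=1$ summand --- produces the stated densities.

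I expect the only real work to be the $N=3$ bookkeeping. Lemma \ref{Lemma:GOE expectation for N=3} is a sum of four blocks, each a polynomial in $a$ and $b$ times a Gaussian exponential and either a univariate $\Phi$ or a bivariate $\Phi_{\Sigma_i}$; after substituting $a=1/(1-\kappa^2)$, $b=\kappa x/\sqrt2$ one must clear all powers of $1-\kappa^2$ against the $(1-\kappa^2)^{-1/2}$ prefactor and check, e.g., that $(24a^3+12a^2+6a+1)b^2/[2a(2a+1)^2]$ becomes $\kappa^2[(1-\kappa^2)^3+6(1-\kappa^2)^2+12(1-\kappa^2)+24]x^2/[4(3-\kappa^2)^2]$, and similarly for the other coefficients. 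This is the step most prone to algebraic slips; the $N=1$ and $N=2$ identities are short enough to be verified by direct hand computation.
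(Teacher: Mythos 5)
Your proposal is correct and follows exactly the paper's own (much terser) proof: substitute the closed forms of Lemmas \ref{Lemma:GOE expectation for N=1}--\ref{Lemma:GOE expectation for N=3} into Theorem \ref{Thm:Palm distr iso Euclidean}, taking $a=1$, $b=0$ for $\E\{M(f)\}$ and $a=(1-\kappa^2)^{-1}$, $b=\kappa x/\sqrt{2}$ for $h(x)$. Your intermediate values ($\sqrt{6}/4$, $1/\sqrt{6}$, $(29\sqrt{6}-36)/144$), the substitution identities, and the observation that the lemma's $\{\Sigma_1,\Sigma_2\}$ pair matches the proposition's (with labels swapped, which is harmless in the sum) all check out.
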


\begin{proof}\
Let $N=1$. By Theorem \ref{Thm:Palm distr iso Euclidean}, we see that $\E\{M(f)\}$ can be obtained by applying Lemma \ref{Lemma:GOE expectation for N=1} with $a=1$ and $b=0$, while $h(x)$ can be obtained by applying Lemma \ref{Lemma:GOE expectation for N=1} with $a=(1-\kappa^2)^{-1}$ and $b=\kappa x/\sqrt{2}$. The cases for $N=2$ and $N=3$ can be proved similarly by applying Lemmas \ref{Lemma:GOE expectation for N=2} and \ref{Lemma:GOE expectation for N=3} respectively.
\end{proof}

\begin{figure}[h!]
  \centering
\includegraphics[scale=0.40]{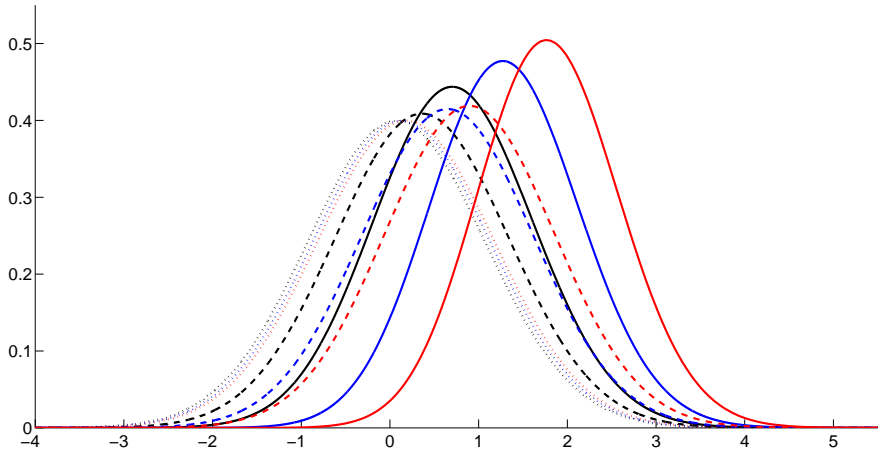}
\caption{Density functions of the height distribution of local maxima on Euclidean space. All three cases $N=1$ (black), $N=2$ (blue) and $N=3$ (red) are displayed. Solid lines represent $\kappa=1$, dashed lines represent $\kappa=0.5$ and dotted lines represent $\kappa=0.1$.}
\label{Fig:Euclidean}
\end{figure}

\begin{remark}
The regularity condition $({\bf C}2)$ in Cheng and Schwartzman (2015) implies that $\kappa^2<3$ when $N=1$, $\kappa^2<2$ when $N=2$ and $\kappa^2<5/3$ when $N=3$. Although Theorem \ref{Thm:Palm distr iso Euclidean} holds for $\kappa\le1$ as shown in Cheng and Schwartzman (2015), we conjecture that the formulae obtained in Proposition \ref{Prop:Euclidean} are valid for any isotropic Gaussian fields satisfying conditions $({\bf C}1)$, $({\bf C}2)$ and $({\bf C}3)$ in Cheng and Schwartzman (2015).
\end{remark}

\section{Isotropic Gaussian Random Fields on Sphere}
In this section, we denote by $\mathbb{S}^N$ the $N$-dimensional unit sphere and let $f= \{f(t): t\in \mathbb{S}^N\}$ be a real-valued, $C^2$, centered, unit-variance isotropic Gaussian random field on $\mathbb{S}^N$. Due to isotropy, we may write the covariance function of $f$ as $C(\l t, s\r)$, $t,s\in \mathbb{S}^N$, where $C(\cdot): [-1,1] \rightarrow \R$ and $\l \cdot, \cdot \r$ is the inner product in $\R^{N+1}$. See Cheng and Schwartzman (2015) and Cheng and Xiao (2014) for more results on the covariance function of an isotropic Gaussian field on $\mathbb{S}^N$. We define
\begin{equation}\label{Def:C' and C''}
C'=C'(1), \quad C''=C''(1), \quad \kappa_1=C'/C'', \quad \kappa_2=C'^2/C''.
\end{equation}
It is known from Cheng and Schwartzman (2015) that $C'>0$ and $C''>0$.

Let $B(t_0, \delta)$ be a geodesic (open) ball on $\mathbb{S}^N$ with radius $\delta$ centered at $t_0\in \mathbb{S}^N$. Let $\delta_0$ be the number such that the area of $B(t_0, \delta_0)$ equals 1. Following the notation in Cheng and Schwartzman (2015), over the unit geodesic ball $B(t_0, \delta_0)$ in $\mathbb{S}^N$, define respectively the expected number of local maxima of $f$ and the expected number of local maxima of $f$ exceeding level $u$ as
\begin{equation*}
\begin{split}
M(f) & = \# \left\{ t \in B(t_0, \delta_0): \nabla f(t)=0, \text{index} (\nabla^2 f(t))=N \right\},\\
M_u(f) & = \# \left\{ t \in B(t_0, \delta_0): f(t)\geq u, \nabla f(t)=0, \text{index} (\nabla^2 f(t))=N \right\}.
\end{split}
\end{equation*}
Define the height distribution of a local maximum of $f$ at some point, say $t_0$, as
\begin{equation*}
F(u) :=\lim_{\ep\to 0} \P\left\{f(t_0)>u | \exists \text{ a local maximum of } f(t) \text{ in } B(t_0, \ep) \right\}.
\end{equation*}
Then we have the following result in Cheng and Schwartzman (2015).
\begin{theorem}\label{Thm:Palm distr iso sphere} Let $\{f(t): t\in \mathbb{S}^N\}$ be a centered, unit-variance, isotropic Gaussian random field satisfying the conditions $({\bf C}1')$, $({\bf C}2')$ and $({\bf C}3')$ in Cheng and Schwartzman (2015). Then for each $u\in \R$,
\begin{equation*}
\begin{split}
\E\{M(f)\} &=\frac{\sqrt{2}}{\pi^{(N+1)/2}\kappa_1^{N/2}\sqrt{1+\kappa_1}} \Gamma\left(\frac{N+1}{2}\right) \E_{GOE}^{N+1}\left\{ \exp\left[\frac{\la_{N+1}^2}{2} -\frac{\la_{N+1}^2}{1+\kappa_1} \right] \right\}
\end{split}
\end{equation*}
and
\begin{equation}\label{Eq:F-sphere}
\begin{split}
F(u) &= \frac{\E\{M_u(f)\}}{\E\{M(f)\}}\\
&=\frac{\left(\frac{1+\kappa_1}{1+\kappa_1-\kappa_2}\right)^{1/2}\int_u^\infty \phi(x)\E_{GOE}^{N+1}\left\{ \exp\left[\frac{\la_{N+1}^2}{2} - \frac{\big(\la_{N+1}-\sqrt{\kappa_2}x/\sqrt{2} \big)^2}{1+\kappa_1-\kappa_2} \right]\right\}dx}{\E_{GOE}^{N+1}\left\{ \exp\left[\frac{\la_{N+1}^2}{2} -\frac{\la_{N+1}^2}{1+\kappa_1} \right] \right\}},
\end{split}
\end{equation}
where $\kappa_1$ and $\kappa_2$ are defined in \eqref{Def:C' and C''}.
\end{theorem}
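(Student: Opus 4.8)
The statement is proved in Cheng and Schwartzman (2015); here is the route I would take. The plan is to evaluate $\E\{M_u(f)\}$ and $\E\{M(f)\}$ by the Kac-Rice metatheorem on the manifold $\mathbb{S}^N$, to use isotropy to freeze the base point, to rewrite the Hessian as a scaled GOE matrix plus a Gaussian multiple of the identity, to identify the surviving matrix integral as the $(N+1)$-dimensional GOE expectation appearing in the statement via the dimension-augmentation trick of Fyodorov (2004), and finally to obtain the height distribution by a localization argument. Writing $\nabla$ and $\nabla^2$ for the covariant gradient and Hessian of $f$ in a local orthonormal frame, the Kac-Rice formula (legitimate under $({\bf C}1')$ and $({\bf C}2')$) gives
\begin{equation*}
\E\{M_u(f)\}=\int_{B(t_0,\delta_0)}\E\!\left[\,|\det\nabla^2 f(t)|\,\mathbbm{1}\{\mathrm{index}(\nabla^2 f(t))=N,\ f(t)\ge u\}\;\Big|\;\nabla f(t)=0\right]p_{\nabla f(t)}(0)\,dt,
\end{equation*}
with the analogue for $\E\{M(f)\}$ obtained by deleting the constraint $\{f(t)\ge u\}$. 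Since $f$ is isotropic the integrand does not depend on $t$, and $B(t_0,\delta_0)$ has area $1$, so the spatial integral just contributes the factor $1$.

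Next I would record the joint Gaussian law of $(f(t),\nabla f(t),\nabla^2 f(t))$ at a fixed point, obtained by differentiating $C(\l t,s\r)$. The coordinates of $\nabla f(t)$ are i.i.d.\ centered Gaussian, independent of $(f(t),\nabla^2 f(t))$, which both evaluates $p_{\nabla f(t)}(0)$ and removes the conditioning on $\nabla f(t)=0$; and the symmetric matrix $\nabla^2 f(t)$ decomposes as $\nabla^2 f(t)=\sigma M_N+\xi I_N$, where $M_N$ is a GOE matrix and $\xi$ is a scalar Gaussian, $\sigma>0$, the joint law of $(\xi,f(t))$ being explicit in $C'$ and $C''$ (equivalently in $\kappa_1,\kappa_2$), with $\xi$ independent of $M_N$. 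Putting $\mu=-\xi/\sigma$, so that $|\det\nabla^2 f(t)|\,\mathbbm{1}\{\mathrm{index}(\nabla^2 f(t))=N\}=\sigma^N|\det(\mu I_N-M_N)|\,\mathbbm{1}\{\la_{\max}(M_N)<\mu\}$ and $(\mu,f(t))$ is jointly Gaussian, then conditioning on $f(t)=x$, integrating out the component of $\mu$ independent of $f(t)$, and imposing $x\ge u$ reduces the numerator to a positive constant times
\begin{equation*}
\int_u^\infty\phi(x)\int_\R e^{-a(\mu-b(x))^2}\,\E_{GOE}^{N}\!\left[|\det(\mu I_N-M_N)|\,\mathbbm{1}\{\la_{\max}(M_N)\le\mu\}\right]d\mu\,dx,
\end{equation*}
with $a$ and $b(x)=\sqrt{\kappa_2}\,x/\sqrt2$ determined by $\kappa_1,\kappa_2$; for $\E\{M(f)\}$ the inner kernel is instead $e^{-a_0\mu^2}$ with $a_0=1/(1+\kappa_1)$.

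The conceptual heart of the argument is the GOE dimension-augmentation identity of Fyodorov (2004):
\begin{equation*}
\int_\R e^{-a(\mu-b)^2}\,\E_{GOE}^{N}\!\left[|\det(\mu I_N-M_N)|\,\mathbbm{1}\{\la_{\max}(M_N)\le\mu\}\right]d\mu=\frac{c_{N+1}}{c_N}\,\E_{GOE}^{N+1}\!\left\{\exp\!\left[\tfrac12\la_{N+1}^2-a(\la_{N+1}-b)^2\right]\right\},
\end{equation*}
which one reads off the eigenvalue density \eqref{Eq:GOE density} by peeling off the largest coordinate $\la_{N+1}$: the ordering constraint $\la_1\le\cdots\le\la_N\le\la_{N+1}$ together with $\prod_{i\le N}(\la_{N+1}-\la_i)=|\det(\la_{N+1}I_N-M_N)|$ is precisely the extra Vandermonde factor linking $Q_N$ to $Q_{N+1}$, while $\exp[\tfrac12\la_{N+1}^2]$ cancels the Gaussian weight on $\la_{N+1}$. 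Taking $a=1/(1+\kappa_1-\kappa_2)$ inside the $x$-integral, and $a=1/(1+\kappa_1)$, $b=0$ for $\E\{M(f)\}$, gives the claimed expressions; the genuinely laborious part --- and the step most prone to error --- is then assembling the constants, namely $c_{N+1}/c_N=\sqrt2\,\Gamma(\tfrac{N+1}{2})$ from \eqref{Eq:normalization constant}, the powers of $\pi$, $\kappa_1$ and $\sigma$, the density $p_{\nabla f(t)}(0)$, the Gaussian normalization in $\mu$, and the emergence of the prefactor $\big(\tfrac{1+\kappa_1}{1+\kappa_1-\kappa_2}\big)^{1/2}$; condition $({\bf C}3')$ is exactly what makes $a$ and $a_0$ positive, so all the integrals converge. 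Integrating in $x$ over $[u,\infty)$ then yields $\E\{M_u(f)\}$, and $\E\{M(f)\}$ is the limiting case $u\to-\infty$.

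Finally, for the height distribution $F(u)$ I would run the standard localization argument. Computing $\E\{M_u(f)\}$ and $\E\{M(f)\}$ over a shrinking geodesic ball $B(t_0,\ep)$ rather than over $B(t_0,\delta_0)$, the Kac-Rice computations above show that both are asymptotically proportional to $\mathrm{area}(B(t_0,\ep))$ with the same constant of proportionality; the probability that $B(t_0,\ep)$ contains two or more local maxima is $O(\mathrm{area}(B(t_0,\ep))^2)$ and hence negligible, so conditionally on there being a local maximum in $B(t_0,\ep)$ it is unique with probability tending to $1$, and $f(t_0)$ differs from its value at that maximum by $o(1)$. Dividing the two expected counts over $B(t_0,\ep)$ and letting $\ep\to0$ gives $F(u)=\E\{M_u(f)\}/\E\{M(f)\}$, hence the explicit ratio in \eqref{Eq:F-sphere}.
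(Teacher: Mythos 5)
The paper states this theorem without proof, importing it directly from Cheng and Schwartzman (2015), so there is no internal proof to compare against; your outline is a correct reconstruction of the Kac--Rice plus GOE argument used in that reference, and your key dimension-augmentation identity (with $c_{N+1}/c_N=\sqrt2\,\Gamma(\tfrac{N+1}{2})$) checks out against \eqref{Eq:GOE density} and \eqref{Eq:normalization constant} and is exactly the manipulation the paper itself performs in Remark \ref{Remark:kappa}. Essentially the same approach as the cited source; no gaps beyond the constant-bookkeeping you already flag as the laborious part.
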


\begin{remark}
Recall the condition $({\bf C}3')$ in Cheng and Schwartzman (2015) that $\kappa_2-\kappa_1\le 1$. Similarly to Remark \ref{Remark:kappa}, it can be shown that, the factors $(1+\kappa_1-\kappa_2)^{-1/2}$ and $(1+\kappa_1-\kappa_2)^{-1}$ in \eqref{Eq:F-sphere} will be cancelled out, implying that \eqref{Eq:F-sphere} also holds when $\kappa_2-\kappa_1= 1$.
\end{remark}

\begin{figure}[h!]
  \centering
\includegraphics[scale=0.40]{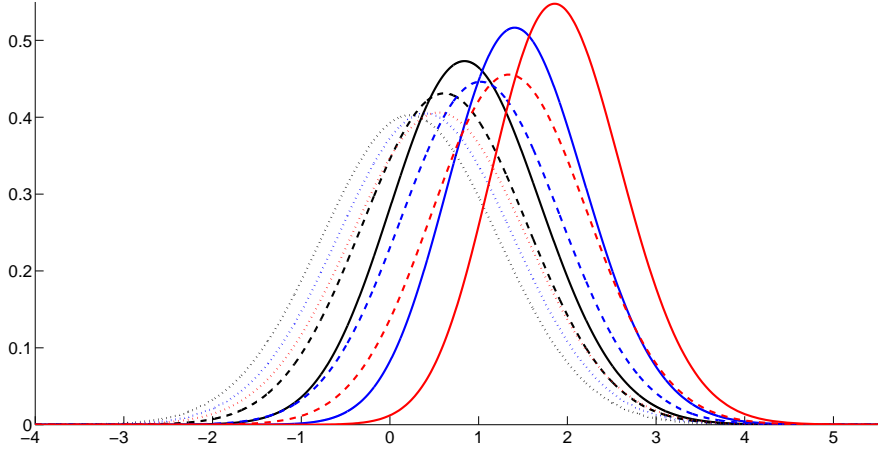}
\caption{Density functions of the height distribution of local maxima on sphere. All three cases $N=1$ (black), $N=2$ (blue) and $N=3$ (red) are displayed. Solid lines represent $\kappa_1=1$ and $\kappa_2=2$, dashed lines represent $\kappa_1=1$ and $\kappa_2=1$ and dotted lines represent $\kappa_1=0.1$ and $\kappa_2=0.1$.}
\label{Fig:Sphere}
\end{figure}

Now, we can obtain the following explicit formulae for $\E\{M(f)\}$ and the height density $h$, which imply $F$ and $\E\{M_u(f)\}$.
\begin{proposition}\label{Prop:Sphere}
\ Let the assumptions in Theorem \ref{Thm:Palm distr iso sphere} hold. If $N=1$, then
\begin{equation*}
\E\{M(f)\} = \frac{\sqrt{3+\kappa_1}}{2\pi\sqrt{\kappa_1}}
\end{equation*}
and
\begin{equation*}
\begin{split}
h(x)&= \frac{1}{\sqrt{3+\kappa_1}}\left[\frac{\sqrt{3+\kappa_1-\kappa_2}}{\sqrt{2\pi}}e^{-\frac{(3+\kappa_1) x^2}{2(3+\kappa_1-\kappa_2)}} + \sqrt{2\pi \kappa_2}x\phi(x)\Phi\left(\frac{\sqrt{\kappa_2}x}{\sqrt{3+\kappa_1-\kappa_2}}\right)\right].
\end{split}
\end{equation*}
If $N=2$, then
\begin{equation*}
\E\{M(f)\} = \frac{1}{4\pi} + \frac{1}{2\pi \kappa_1\sqrt{3+\kappa_1}}
\end{equation*}
and
\begin{equation*}
\begin{split}
h(x)&=\frac{2\sqrt{3+\kappa_1}}{2+\kappa_1\sqrt{3+\kappa_1}}\Bigg\{ \left[\kappa_1+\kappa_2(x^2-1)\right]\phi(x)\Phi\left(\frac{\sqrt{\kappa_2} x}{\sqrt{2+\kappa_1-\kappa_2}} \right) \\
&\quad+ \frac{\sqrt{\kappa_2(2+\kappa_1-\kappa_2)}}{2\pi}xe^{-\frac{(2+\kappa_1)x^2}{2(2+\kappa_1-\kappa_2)}} \\
&\quad + \frac{\sqrt{2}}{\sqrt{\pi(3+\kappa_1-\kappa_2)}}e^{-\frac{(3+\kappa_1)x^2}{2(3+\kappa_1-\kappa_2)}} \Phi\left(\frac{\sqrt{\kappa_2}x}{\sqrt{(2+\kappa_1-\kappa_2)(3+\kappa_1-\kappa_2)}} \right) \Bigg\},
\end{split}
\end{equation*}
If $N=3$, then
\begin{equation*}
\E\{M(f)\} = \frac{1}{\pi^2\kappa_1^{3/2}}\left\{ \frac{1}{2\sqrt{3+\kappa_1}}\left[ \frac{3}{2} + \frac{(1+\kappa_1)(2\kappa_1^2 + 7\kappa_1 + 11)}{4(3+\kappa_1)} \right] + \frac{(\kappa_1-1)(\kappa_1 +2)}{4\sqrt{2+\kappa_1}} \right\}
\end{equation*}
and
\begin{equation*}
\begin{split}
h(x) &=\left\{ \frac{1}{2\sqrt{2(3+\kappa_1)}}\left[ \frac{3}{2} + \frac{(1+\kappa_1)(2\kappa_1^2 + 7\kappa_1 + 11)}{4(3+\kappa_1)} \right] + \frac{(\kappa_1-1)(\kappa_1 +2)}{4\sqrt{2(2+\kappa_1)}} \right\}^{-1}\phi(x)\\
&\times \Bigg\{ \Bigg[\frac{\kappa_2\left[(1+\kappa_1-\kappa_2)^3+6(1+\kappa_1-\kappa_2)^2+12(1+\kappa_1-\kappa_2)+24\right]}{4(3+\kappa_1-\kappa_2)^2}x^2\\
&\quad + \frac{2(1+\kappa_1-\kappa_2)^3 + 3(1+\kappa_1-\kappa_2)^2 + 6(1+\kappa_1-\kappa_2)}{4(3+\kappa_1-\kappa_2)} + \frac{3}{2}\Bigg]\frac{1}{\sqrt{2(3+\kappa_1-\kappa_2)}}\\
&\quad \times e^{-\frac{\kappa_2 x^2}{2(3+\kappa_1-\kappa_2)}}\Phi\left( \frac{2\sqrt{\kappa_2}x}{\sqrt{(3+\kappa_1-\kappa_2)(5+3\kappa_1-3\kappa_2)}} \right)\\
&\quad + \left[ \frac{\kappa_2(2+\kappa_1-\kappa_2)}{4}x^2 + \frac{(\kappa_1-\kappa_2)(1+\kappa_1-\kappa_2)}{2} -1 \right]\frac{1}{\sqrt{2(2+\kappa_1-\kappa_2)}}\\
&\quad \times e^{-\frac{\kappa_2 x^2}{2(2+\kappa_1-\kappa_2)}}\Phi\left( \frac{\sqrt{\kappa_2}x}{\sqrt{(2+\kappa_1-\kappa_2)(5+3\kappa_1-3\kappa_2)}} \right)\\
&\quad + \left[ 7+\kappa_1-\kappa_2 + \frac{3(1+\kappa_1-\kappa_2)^3 + 12(1+\kappa_1-\kappa_2)^2 + 28(1+\kappa_1-\kappa_2)}{2(3+\kappa_1-\kappa_2)} \right]\\
&\quad \times \frac{\sqrt{\kappa_2}}{4\sqrt{\pi}(3+\kappa_1-\kappa_2)\sqrt{5+3\kappa_1-3\kappa_2}}x e^{-\frac{3\kappa_2 x^2}{2(5+3\kappa_1-3\kappa_2)}}\\
&\quad +\left[ \kappa_2 x^2+3(\kappa_1-\kappa_2) \right]\frac{\sqrt{\pi\kappa_2}}{4}x\left[\Phi_{\Sigma_1}\left(0,\frac{\sqrt{\kappa_2} x}{\sqrt{2}}\right) + \Phi_{\Sigma_2}\left(0,\frac{\sqrt{\kappa_2} x}{\sqrt{2}}\right)\right]\Bigg\},
\end{split}
\end{equation*}
where
\begin{equation*}
\begin{split}
\Sigma_1 = \left(
\begin{array}{cc}
\frac{3}{2} & -1 \\
-1 & \frac{3+\kappa_1-\kappa_2}{2} \\
\end{array}
\right), \quad \Sigma_2 = \left(
\begin{array}{cc}
\frac{3}{2} & -\frac{1}{2} \\
-\frac{1}{2} & \frac{2+\kappa_1-\kappa_2}{2} \\
\end{array}
\right).
\end{split}
\end{equation*}
\end{proposition}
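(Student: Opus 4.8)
The plan is to obtain both formulae by feeding the two GOE expectations appearing in Theorem~\ref{Thm:Palm distr iso sphere} into the dimension-specific Lemmas~\ref{Lemma:GOE expectation for N=1}, \ref{Lemma:GOE expectation for N=2} and \ref{Lemma:GOE expectation for N=3}, one value of $N$ at a time. The key observation is that both GOE expectations occurring in Theorem~\ref{Thm:Palm distr iso sphere} are instances of the single quantity $\E_{GOE}^{N+1}\{\exp[\la_{N+1}^2/2-a(\la_{N+1}-b)^2]\}$ evaluated in those lemmas, for appropriate $a>0$ and $b\in\R$ depending on $\kappa_1,\kappa_2$ and, for the height density, on $x$.

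\textbf{Step 1: the constant $\E\{M(f)\}$.} Here the GOE expectation is $\E_{GOE}^{N+1}\{\exp[\la_{N+1}^2/2-\la_{N+1}^2/(1+\kappa_1)]\}$, i.e.\ the lemma quantity with $a=1/(1+\kappa_1)>0$ and $b=0$. Substituting $a=1/(1+\kappa_1)$, $b=0$ into Lemma~\ref{Lemma:GOE expectation for N=1} (resp.\ \ref{Lemma:GOE expectation for N=2}, \ref{Lemma:GOE expectation for N=3}), using $\Phi(0)=1/2$, inserting the result into the prefactor of Theorem~\ref{Thm:Palm distr iso sphere} with $\Gamma(1)=1$, $\Gamma(3/2)=\sqrt{\pi}/2$, $\Gamma(2)=1$, and simplifying, yields the stated $\E\{M(f)\}$ for $N=1,2,3$.

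\textbf{Step 2: the height density $h$.} From $F(u)=\int_u^\infty h(t)\,dt$ we have $h(x)=-F'(x)$, so differentiating the integral in \eqref{Eq:F-sphere} gives
\begin{equation*}
h(x)=\frac{\big(\tfrac{1+\kappa_1}{1+\kappa_1-\kappa_2}\big)^{1/2}\phi(x)\,\E_{GOE}^{N+1}\big\{\exp\big[\la_{N+1}^2/2-(\la_{N+1}-\sqrt{\kappa_2}\,x/\sqrt{2})^2/(1+\kappa_1-\kappa_2)\big]\big\}}{\E_{GOE}^{N+1}\big\{\exp\big[\la_{N+1}^2/2-\la_{N+1}^2/(1+\kappa_1)\big]\big\}}.
\end{equation*}
The denominator is exactly the quantity evaluated in Step 1; the numerator GOE expectation is the lemma quantity with $a=1/(1+\kappa_1-\kappa_2)$ and $b=\sqrt{\kappa_2}\,x/\sqrt{2}$. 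Substituting and simplifying then gives the stated $h(x)$. In the $N=3$ case one checks, in addition, that $a=1/(1+\kappa_1-\kappa_2)$ turns the $(2,2)$-entries $(1+a)/(2a)$ and $(1+2a)/(2a)$ of the two matrices in Lemma~\ref{Lemma:GOE expectation for N=3} into $(2+\kappa_1-\kappa_2)/2$ and $(3+\kappa_1-\kappa_2)/2$, reproducing the matrices $\Sigma_1,\Sigma_2$ of the proposition (the labeling of the two matrices is immaterial, since only the sum $\Phi_{\Sigma_1}+\Phi_{\Sigma_2}$ enters), and that the arguments of the various $\Phi$'s transform as displayed.

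\textbf{The main obstacle.} Lemmas~\ref{Lemma:GOE expectation for N=1}--\ref{Lemma:GOE expectation for N=3} require $a>0$, i.e.\ $1+\kappa_1-\kappa_2>0$, whereas condition $({\bf C}3')$ only guarantees $1+\kappa_1-\kappa_2\ge0$. Exactly as in Remark~\ref{Remark:kappa}, however, each factor $(1+\kappa_1-\kappa_2)^{-1/2}$ (and $(1+\kappa_1-\kappa_2)^{-1}$) introduced by Theorem~\ref{Thm:Palm distr iso sphere} is cancelled by matching positive powers of $1+\kappa_1-\kappa_2$ produced inside the lemma evaluations, so that $(1+\kappa_1-\kappa_2)^{-1/2}\E_{GOE}^{N+1}\{\cdots\}$ stays finite and analytic as $1+\kappa_1-\kappa_2\downarrow0$; the boundary case $\kappa_2-\kappa_1=1$ then follows by continuity. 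Beyond keeping track of this cancellation, the proof is a long but routine algebraic simplification, with essentially all of the effort concentrated in the $N=3$ identity, where matching the polynomial coefficients of $x^2$ and the arguments of $\Phi$, $\Phi_{\Sigma_1}$, $\Phi_{\Sigma_2}$ is where a stray factor is most likely to creep in.
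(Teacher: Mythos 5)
Your proposal is correct and is exactly the paper's own argument: the authors likewise obtain $\E\{M(f)\}$ by applying Lemmas~\ref{Lemma:GOE expectation for N=1}--\ref{Lemma:GOE expectation for N=3} with $a=(1+\kappa_1)^{-1}$, $b=0$, and $h(x)=-F'(x)$ with $a=(1+\kappa_1-\kappa_2)^{-1}$, $b=\sqrt{\kappa_2}x/\sqrt{2}$, the boundary case $\kappa_2-\kappa_1=1$ being handled by the same cancellation noted in the remark following Theorem~\ref{Thm:Palm distr iso sphere}.
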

\begin{proof}\
Let $N=1$. By Theorem \ref{Thm:Palm distr iso sphere}, we see that $\E\{M(f)\}$ can be obtained by applying Lemma \ref{Lemma:GOE expectation for N=1} with $a=(1+\kappa_1)^{-1}$ and $b=0$, while $h(x)$ can be obtained by applying Lemma \ref{Lemma:GOE expectation for N=1} with $a=(1+\kappa_1-\kappa_2)^{-1}$ and $b=\sqrt{\kappa_2} x/\sqrt{2}$. The cases for $N=2$ and $N=3$ can be proved similarly by applying Lemmas \ref{Lemma:GOE expectation for N=2} and \ref{Lemma:GOE expectation for N=3} respectively.
\end{proof}

\begin{remark}
The regularity condition $({\bf C}2')$ in Cheng and Schwartzman (2015) implies that $\kappa_2-\kappa_1<3$ when $N=1$, $\kappa_2-\kappa_1<2$ when $N=2$ and $\kappa_2-\kappa_1<5/3$ when $N=3$. Although Theorem \ref{Thm:Palm distr iso sphere} holds for $\kappa_2-\kappa_1\le1$ as shown in Cheng and Schwartzman (2015), we conjecture that the formulae obtained in Proposition \ref{Prop:Sphere} are valid for any isotropic Gaussian fields satisfying conditions $({\bf C}1')$, $({\bf C}2')$ and $({\bf C}3')$ in Cheng and Schwartzman (2015).
\end{remark}

\bibliographystyle{plain}

\begin{small}

\end{small}

\bigskip

\begin{quote}
\begin{small}

\textsc{Dan Cheng}: Department of Statistics, North Carolina State
University, 2311 Stinson Drive, Campus Box 8203, Raleigh, NC 27695, U.S.A.\\
E-mail: \texttt{dcheng2@ncsu.edu}\\
[1mm]

\textsc{Armin Schwartzman}: Department of Statistics, North Carolina State
University, 2311 Stinson Drive, Campus Box 8203, Raleigh, NC 27695, U.S.A.\\
E-mail: \texttt{aschwar@ncsu.edu}\\
URL: \texttt{http://www4.stat.ncsu.edu/\~{}schwartzman/}

\end{small}
\end{quote}

\end{document}